\newtheorem{theorem}{Theorem}[section]
\newtheorem{lemma}{Lemma}[section]
\newenvironment{proof}[1][Proof]{\begin{trivlist}
\item[\hskip \labelsep {\bfseries #1}]}{\end{trivlist}}
\numberwithin{equation}{section} 
\begin{document}
\title {A Derivative-Hilbert operator acting on BMOA space\footnote{   The research
was supported by Zhejiang Province Natural Science Foundation(Grant No. LY23A010003).}}
\author{  Huiling Chen\footnote{E-mail address:  HuillingChen@163.com}\quad\quad Shanli Ye\footnote{Corresponding author.~ E-mail address: slye@zust.edu.cn} \\
(\small \it School of Science, Zhejiang University of Science and Technology,
Hangzhou 310023, China)}
 \date{}
\maketitle

\begin{abstract}
Let $\mu$ be a positive Borel measure on the interval $[0,1)$. The Hankel matrix $\mathcal{H}_{\mu}=(\mu_{n,k})_{n,k\geq 0}$ with entries $\mu_{n,k}=\mu_{n+k}$, where $\mu_{n}=\int_{[0,1)}t^nd\mu(t)$,  induces, formally, the Derivative-Hilbert operator  $$\mathcal{DH}_\mu(f)(z)=\sum_{n=0}^\infty\left(\sum_{k=0}^\infty \mu_{n,k}a_k\right)(n+1)z^n ,  ~z\in \mathbb{D},$$
where $f(z)=\sum_{n=0}^\infty a_nz^n$ is an analytic function in $\mathbb{D}$.  We characterize the  measures  $\mu$ for which $\mathcal{DH}_\mu$ is
 a bounded operator on $BMOA$ space. We also study the analogous problem   from the $\alpha$-Bloch space $\mathcal{B}_\alpha(\alpha>0)$ into the $BMOA$ space.
 \vspace{8pt}
\\
{\small\bf Keywords}\quad {Hilbert operator, Bloch space, BMOA space, Carleson measure
 \\
    {\small\bf 2020 MR Subject Classification }\quad 47B38, 30H30, 30H35\\}
\end{abstract}

\maketitle

\section{\textbf{Introduction}}
\quad
\par
Let $\mathbb{D}=\left \{ z\in \mathbb{C}:\left | z \right |<1  \right \}$ denote respectively the open unit disc and the unit circle in the complex plane $\mathbb{C}$, and let $H(\mathbb{D})$ be the space of all analytic functions in $\mathbb{D}$ and $dA(z)=\frac{1}{\pi}dxdy$ the normalized area Lebesgue measure.

  If $0<r<1$ and $f\in H(\mathbb{D})$, we set
\begin{align}
   & M_p (r,f)=\left( \frac{1}{2\pi} \int_0^{2\pi} |f(re^{i \theta})|^p d\theta \right)^\frac{1}{p}, \quad 0<p<\infty.  \notag\\
   & M_\infty (r,f)=\sup_{|z|=r}|f(z)|.\notag
\end{align}

For $0<p\leq \infty$, the Hardy space $H^p$ consists of those $f \in H(\mathbb{D})$ such that
$$||f||_{H^p} \overset{def}{=} \sup_{0<r<1}M_p(r,f)<\infty.$$

We refer to \cite{5} for the terminology and findings on Hardy spaces.

The space $BMOA$ consists of those functions $f \in H^1$ whose boundary values has bounded mean oscillation on $\partial \mathbb{D}$, in accordance with the definition by John and Nirenberg. Numerous properties and descriptions can be attributed to BMOA functions. Let us mention the following: for $a \in \mathbb{D}$, let $\varphi_a$ be the M$\ddot{o}$bius transformation
defined by $\varphi_a(z)=\frac{a-z}{1-\overline{a}z}$. If $f$ is an analytic function in $\mathbb{D}$, then $f \in BMOA $ if and only if
$$||f||_{BMOA} \overset{def}{=} |f(0)|+||f||_*< \infty,$$
where
$$||f||_* \overset{def}{=}\sup_{a\in\mathbb{D}}\{\int_{\mathbb{D}}|f'(z)|^2(1-|\varphi_a(z)|^2)dA(z)\}^{1/2},$$
  For an exposition on the theory of BMOA functions, one should review the content in \cite{8}.

For $0<\alpha<\infty$, the $\alpha$-Bloch space $\mathcal{B}_{\alpha}$ consists of those functions $f\in H(\mathbb{D})$ with
$$\|f\|_{\mathcal{B}_{\alpha}}= |f(0)|+\sup_{z\in\mathbb{D}}(1-|z|^2)^{\alpha}|f'(z)|<\infty.$$
We can see that $\mathcal{B}_{1}$ is the classical Bloch space $\mathcal{B}$. Consult references \cite{10,14} for the notation and results concerning the Bloch type spaces. It is a recognized fact that $BMOA\varsubsetneq\mathcal{B}$.
\par
Suppose that  $\mu$ is a finite positive Borel measure on $[0,1)$, and the Hankel matrix defined by its elements $\left ( \mu_{n,k} \right ) _{n,k\ge 0}$ with entries $\mu_{n,k}=\mu_{n+k}$, where $\mu_{n}=\int_{[0,1)}^{} t^nd\mu\left ( t \right )$, formally represents the Hilbert operator
$$
\mathcal{H}_\mu(f)(z)=\sum_{n=0}^{\infty}\left ( \sum_{k=0}^{\infty}\mu_{n,k}a_{k}  \right ) z^n,z\in\mathbb{D},
$$
whenever the right hand side is well defined and defines a function in $H(\mathbb{D})$.

The generalized Hilbert operator $\mathcal{H}_\mu$ has been methodically studied in many different spaces, such as Bergman spaces, Bloch spaces, Hardy spaces(e.g.\cite{6,7,9,12}).

In Ye and Zhou's works \cite{16,17}, they defined the derivative-Hilbert operator $\mathcal{DH}_{\mu }$ as follows:
\begin{align}\label{eqn1.1}
\mathcal{DH}_{\mu }\left ( f \right ) \left ( z \right ) =\sum_{n=0}^{\infty} \left (\sum_{k=0}^{\infty} \mu_{n,k}a_k  \right )\left ( n+1 \right ) z^n.
\end{align}
\par  
  Another generalized integral-Hilbert operator $\mathcal{I}_{{\mu}_\alpha }(\alpha\in\mathbb{N}^+)$ relevant to $\mathcal{DH}_{\mu }$ defined by
\begin{align}\label{eqn1.2}
 \mathcal{I}_{{\mu}_\alpha }\left ( f \right ) \left ( z \right ) =\int_{[0,1)}^{} \frac{f\left ( t \right ) }{\left ( 1-tz \right )^\alpha  }d\mu \left ( t \right )
\end{align}
whenever the right hind side is well defined and defines an analytic function in $\mathbb{D}$. If $\alpha=1$, then $\mathcal{I}_{{\mu}_\alpha }$ is the integral operator $\mathcal{I}_{{\mu}}$. Ye and Zhou characterized the measures $\mu$ for which $ \mathcal{I}\mu_2$
and $\mathcal{DH}_\mu$ are bounded (resp., compact) on the Bloch space \cite{16} and on the Bergman spaces
\cite{17}. In this article, we can also gain the operators $\mathcal{DH}_\mu$ and $ \mathcal{I}\mu_2$ are intricately connected.

Let us review the comcept of the Carleson-type measures, which is a useful tool for understanding Banach spaces of analytic functions.

If $I\subset\partial \mathbb{D}$ in an arc, $|I|$ denotes the length of $I$, the Carleson square $S(I)$ is defined as
$$
 S(I)=\left\{z=re^{it}:e^{it}\in I, 1-\frac{|I|}{2\pi}\leq r < 1 \right\}.
$$

Suppose that $\mu$ is a positive Borel measure on $\mathbb{D}$. For $0\leq \beta < \infty$ and $ 0<s< \infty $, we say that $\mu$ is a $\beta$-logarithmic $s$-Carleson measure if there exists a positive constant $C$ such that
 $$\sup_I\frac{\mu(S(I))(\log\frac{ 2\pi }{|I|})^\beta}{|I|^s} \leq C, \quad  \quad I \subset \partial \mathbb{D}.$$
If $\mu(S(I))(\log\frac{ 2\pi }{|I|})^\beta=o(|I|^s)$ as  $|I|\rightarrow 0$, we say that $\mu$ is a vanishing $\beta$-logarithmic $s$-Carleson measure.

  A positive Borel measure on $[0,1)$  can also be seen as a Borel measure on $\mathbb{D}$ by identifying it with the measure $\mu$ defined by
$$\tilde{\mu}(E)=\mu(E\bigcap [0,1))$$
for any Borel subset $E$ of $\mathbb{D}$.  Then we say that $\mu$ is a $\beta$-logarithmic $s$-Carleson measure if there exists a positive constant $C$ such that
$$
\mu ([t,1)) \log^\beta\frac{e}{1-t } \le C(1-t)^s,\quad for\ all \ 0\le t<1.
$$

In detail, $\mu$ is a $s$-Carleson measure if $\beta=0$. If $\mu$ satisfies
$$
\lim_{t\to1^-}\frac{\mu ([t,1)) \log^\beta\frac{e}{1-t }}{(1-t)^s} =0,
$$
we say that $\mu$ is a vanishing $\beta$-logarithmic $s$-Carleson measure(see \cite{13,18}).

In this article we focus on qualify the positive Borel measure $\mu$ such that $\mathcal{DH}_{\mu}$ is bounded on $BMOA$ space. Additionally, we also do similar work for the operators acting from $\mathcal{B}^\alpha(0<\alpha<\infty)$ spaces into $BMOA$ space.

Throughout this work, the symbol $C$ represents an absolute constant which may be different from one occurrence to next. We employ the notation $``A\lesssim B"$ means that there exists a positive constant $C=C(\cdot)$ such that $A\le CB$ and $``A\gtrsim B"$ is interpreted in a comparable fashion.

\section{The operator $\mathcal{DH}_{\mu }$ acting on the $BMOA$ space }
In this section, we shall give a characterization of those measures $\mu$ for which $\mathcal{DH}_{\mu }$ is a bounded operator on the $BMOA$ space. The following two lemmas are easily obtained by the fact that $BMOA\varsubsetneq\mathcal{B}$, \cite[Theorem 2.1]{16} and \cite[Theorem 2.2]{16}.
\begin{lemma}\label{th2.1}
Let $\mu$ be a positive Borel measure on $[0,1)$. Then the following two statements are equivalent.

(i) $\int_{[0,1)}^{} \log \frac{e}{1-t} d\mu(t)<\infty;$

(ii) For any given $f\in BMOA$, the integral in (\ref{eqn1.2}) uniformly converges on any compact subset of $\mathbb{D}.$

\end{lemma}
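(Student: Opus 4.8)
The plan is to read off both implications from the logarithmic growth bound for Bloch functions together with the Bloch-space versions of the statement in \cite{16}. The one ingredient that ties everything together is the inclusion $BMOA\varsubsetneq\mathcal{B}$: since every $f\in BMOA$ is a Bloch function, it satisfies the pointwise estimate
$$|f(z)|\lesssim\|f\|_{\mathcal{B}}\log\frac{e}{1-|z|},\qquad z\in\mathbb{D},$$
and in particular $|f(t)|\lesssim\|f\|_{\mathcal{B}}\log\frac{e}{1-t}$ for the real parameters $t\in[0,1)$ appearing in the integral (\ref{eqn1.2}). This logarithmic control at $t\to1^{-}$ is exactly what couples the integrability condition (i) to the convergence statement (ii).

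For (i)$\Rightarrow$(ii), I would fix a compact set $K\subset\mathbb{D}$ and pick $r\in(0,1)$ with $K\subseteq\{|z|\le r\}$. For $z\in K$ and $t\in[0,1)$ one has $|1-tz|\ge 1-r>0$, so the integrand is dominated, uniformly in $z\in K$, by
$$\left|\frac{f(t)}{(1-tz)^{\alpha}}\right|\lesssim\frac{\|f\|_{\mathcal{B}}}{(1-r)^{\alpha}}\,\log\frac{e}{1-t}.$$
Assumption (i) asserts precisely that the majorant is $\mu$-integrable, so a Weierstrass M-test argument yields absolute and uniform convergence of the integral on $K$. This is the same conclusion produced for the whole Bloch space in \cite[Theorem 2.1]{16}, which may be quoted directly once one observes $BMOA\subset\mathcal{B}$.

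For the converse (ii)$\Rightarrow$(i), the idea is to test the hypothesis against a single explicit function of exact logarithmic growth. I would take $f(z)=\log\frac{e}{1-z}=1-\log(1-z)$, which lies in $BMOA$ since it differs by a constant from the classical $BMOA$ function $\log\frac{1}{1-z}$; this is the same extremal function underlying \cite[Theorem 2.2]{16}. Evaluating (\ref{eqn1.2}) at the interior point $z=0$, which belongs to a compact subset of $\mathbb{D}$, the convergence granted by (ii) forces
$$\int_{[0,1)}\log\frac{e}{1-t}\,d\mu(t)=\int_{[0,1)}f(t)\,d\mu(t)<\infty,$$
which is exactly statement (i).

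The growth estimate and the dominated-convergence step are routine, and indeed the whole lemma is ``easily obtained'' once the Bloch-space results are in hand. The only point deserving genuine attention is the reverse direction: convergence is assumed merely for the smaller class $BMOA$ rather than for all of $\mathcal{B}$, so one cannot simply cite the Bloch characterization but must exhibit an actual $BMOA$ test function realizing the logarithmic growth. Verifying that $\log\frac{e}{1-z}\in BMOA$ settles this, and no serious obstacle remains.
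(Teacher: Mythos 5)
Your proof is correct and follows essentially the same route the paper intends: the paper offers no written proof, only the remark that the lemma follows from $BMOA\varsubsetneq\mathcal{B}$ and \cite[Theorem 2.1]{16}, and your argument is exactly the fleshed-out version of that citation (logarithmic growth plus an M-test for sufficiency, the test function $\log\frac{e}{1-z}$ for necessity). You also correctly flag the one point the paper glosses over, namely that the necessity direction needs the extremal logarithmic function to lie in $BMOA$ and not merely in $\mathcal{B}$, which it does.
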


\begin{lemma}\label{th-2.2}
Let $\mu$ be a positive Borel measure on $[0,1)$ with $\int_{[0,1)}^{}\log\frac{e}{1-t}d\mu(t)<\infty$. If the measure $\mu$ is a $1$-logarithmic $1$-Carleson measure, then for every $f\in BMOA$ (\ref{eqn1.1}) is a well defined analytic function in $\mathbb{D}$ and $\mathcal{DH}_{\mu }= \mathcal{I}_{{\mu}_2 }$.
\end{lemma}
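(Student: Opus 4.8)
The plan is to establish the two claims together through a common set of estimates: first that the inner series $\sum_{k\ge 0}\mu_{n,k}a_k$ defining the $n$-th coefficient converges and equals $\int_{[0,1)}t^nf(t)\,d\mu(t)$, and then that the resulting power series is precisely the Taylor expansion of $\mathcal{I}_{\mu_2}(f)$. Throughout I would lean on two structural facts. Since $BMOA\subset H^2$, the coefficient sequence satisfies $\{a_k\}\in\ell^2$ with $\sum_k|a_k|^2=\|f\|_{H^2}^2<\infty$; and since a $1$-logarithmic $1$-Carleson measure is in particular a $1$-Carleson measure, its moments obey $\mu_n=\int_{[0,1)}t^n\,d\mu(t)\lesssim (n+1)^{-1}$, which I would verify by writing $\mu_n=\int_0^1 n s^{n-1}\mu([s,1))\,ds$ and inserting $\mu([s,1))\lesssim 1-s$. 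In particular $\{\mu_n\}\in\ell^2$.

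For the inner sum, fix $n$ and apply the Cauchy--Schwarz inequality to $\sum_{k\ge 0}\mu_{n+k}|a_k|$, bounding it by $\big(\sum_k|a_k|^2\big)^{1/2}\big(\sum_k\mu_{n+k}^2\big)^{1/2}<\infty$, the second factor being finite because $\{\mu_m\}\in\ell^2$. Thus the inner series converges absolutely, which legitimizes the Tonelli--Fubini interchange $\sum_{k\ge0}\mu_{n,k}a_k=\sum_{k\ge0}a_k\int_{[0,1)}t^{n+k}\,d\mu(t)=\int_{[0,1)}t^n f(t)\,d\mu(t)$, the last step using $f(t)=\sum_k a_k t^k$ for $t\in[0,1)$. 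Writing $c_n$ for this common value, the Bloch growth estimate $|f(t)|\lesssim\|f\|_{\mathcal{B}}\log\frac{e}{1-t}$ together with the hypothesis $\int_{[0,1)}\log\frac{e}{1-t}\,d\mu(t)<\infty$ yields the uniform bound $|c_n|\le\int_{[0,1)}|f(t)|\,d\mu(t)\lesssim\int_{[0,1)}\log\frac{e}{1-t}\,d\mu(t)<\infty$.

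It then remains to sum over $n$ and identify the limit. Since the $c_n$ are uniformly bounded, for $|z|=r<1$ one has $\sum_{n\ge0}(n+1)|c_n|r^n\le\big(\sup_n|c_n|\big)\sum_{n\ge0}(n+1)r^n=\frac{\sup_n|c_n|}{(1-r)^2}<\infty$, so $\mathcal{DH}_\mu(f)(z)=\sum_{n\ge0}(n+1)c_nz^n$ converges locally uniformly and defines an analytic function in $\mathbb{D}$. To match it with $\mathcal{I}_{\mu_2}(f)$, which is analytic by Lemma \ref{th2.1}, I would expand $\frac{1}{(1-tz)^2}=\sum_{n\ge0}(n+1)(tz)^n$ and interchange sum and integral; this is justified by dominated convergence, since for $|z|=r<1$ the partial sums are dominated by $|f(t)|/(1-tr)^2\le|f(t)|/(1-r)^2$, whose $\mu$-integral is finite by the previous paragraph. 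This gives $\mathcal{I}_{\mu_2}(f)(z)=\sum_{n\ge0}(n+1)c_nz^n=\mathcal{DH}_\mu(f)(z)$, as claimed.

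The technical heart of the argument is the convergence of the inner series: the crude bound $|a_k|\le C\|f\|_{\mathcal{B}}$ only gives $\sum_k\mu_{n+k}|a_k|\lesssim\sum_{m\ge n}\mu_m$, which diverges for a $1$-logarithmic $1$-Carleson measure. The point I expect to be the main obstacle is precisely avoiding this loss; recognizing that $f\in BMOA$ forces $\{a_k\}\in\ell^2$ rather than merely bounded, and pairing this with $\{\mu_m\}\in\ell^2$ via Cauchy--Schwarz, is what rescues absolute convergence. Should one prefer to bypass the $\ell^2$ bookkeeping, an alternative is to prove $\sum_{k\ge0}\mu_{n+k}a_kr^k=\int_{[0,1)}t^nf(rt)\,d\mu(t)$ for $r<1$ (where the interchange is trivial) and then let $r\to1^-$, treating the right-hand side by dominated convergence and the left-hand side by Abel's theorem.
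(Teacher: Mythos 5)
Your argument is correct, but it takes a genuinely different route from the paper. The paper disposes of this lemma in one line: since $BMOA\varsubsetneq\mathcal{B}$, it simply invokes the Bloch-space version of the statement, namely \cite[Theorem 2.2]{16}, so all the analytic work is outsourced to that reference. You instead give a self-contained proof that exploits structure \emph{specific} to $BMOA$: the inclusion $BMOA\subset H^2$ puts the Taylor coefficients in $\ell^2$, while the $1$-Carleson property gives $\mu_n\lesssim (n+1)^{-1}$, hence $\{\mu_n\}\in\ell^2$, and Cauchy--Schwarz then yields absolute convergence of the inner sums; the remaining interchanges are routine Fubini/dominated convergence. This is arguably simpler than the Bloch-space argument it replaces (where, as you correctly observe, the coefficients are only bounded and the crude estimate fails), and it shows that for the $BMOA$ statement the full $1$-logarithmic $1$-Carleson hypothesis is not needed --- a plain $1$-Carleson measure with $\int_{[0,1)}\log\frac{e}{1-t}\,d\mu(t)<\infty$ suffices. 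The price is that your proof does not recover the Bloch-space case, which the paper gets for free from \cite{16}. One small caveat: the alternative you sketch in the last sentence (interchanging for $r<1$ and invoking Abel's theorem as $r\to1^-$) is circular as stated, since Abel's theorem presupposes the convergence of $\sum_k\mu_{n+k}a_k$, which is exactly what is at stake; your main Cauchy--Schwarz argument does not suffer from this and should be regarded as the actual proof.
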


The following Lemma is a characterization of Carleson measure on $[0,1)$.
\begin{lemma}\label{le4.1111}\cite{24}
If $0<\beta<\infty,0\le \alpha<\gamma<\infty$ and $\mu$ is a positive Borel measure on $[0,1)$. Then the following statements are equivalent.

(i) $\mu$ is an $\gamma$-Carleson measure;

(ii)
$$
\sup\limits_{a\in \mathbb{D}}\int_{[0,1) }^{}\frac{\left ( 1-\left | a \right |  \right )^\beta }{\left ( 1-x \right )^\alpha\left ( 1-\left | a \right | x \right )^{\gamma+\beta-\alpha}}d\mu(t)<\infty;
$$

(iii)
$$
\sup\limits_{a\in \mathbb{D}}\int_{[0,1) }^{}\frac{\left ( 1-\left | a \right |  \right )^\beta }{\left ( 1-x \right )^\alpha\left ( 1-ax \right )^{\gamma+\beta-\alpha}}d\mu(t)<\infty.
$$
\end{lemma}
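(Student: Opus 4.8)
The plan is to reduce everything to a real dilation parameter and then separate the two routine implications from the one substantive estimate. Since the integrand in (ii) depends on $a$ only through $|a|$, I would fix $a=r\in[0,1)$ throughout and abbreviate $s=\gamma+\beta-\alpha$, which is strictly positive because $\gamma>\alpha$ and $\beta>0$. For the equivalence of (ii) and (iii), I would use the elementary inequality $1-|a|x\le|1-ax|$ valid for $x\in[0,1)$; since $s>0$ this gives $|1-ax|^{-s}\le(1-|a|x)^{-s}$, so the integral in (iii) is dominated pointwise by that in (ii) for every $a$, yielding (ii) $\Rightarrow$ (iii). The reverse holds because, restricting the supremum in (iii) to real $a=r\in[0,1)$, the two integrands coincide, while (ii) only ever sees $|a|$.

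For (ii) $\Rightarrow$ (i) I would feed the Carleson box back into the integral. Fixing $t\in[0,1)$ and taking $a=t$, restrict the integral to $[t,1)$; there $1-x\le 1-t$ and $1-tx\le 1-t^2\le 2(1-t)$, so the integrand is bounded below by a fixed constant multiple of $(1-t)^{-\gamma}$. Hence the assumed uniform bound $C$ on the integral forces $\mu([t,1))\lesssim(1-t)^\gamma$, which is exactly the statement that $\mu$ is a $\gamma$-Carleson measure.

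The heart of the matter is (i) $\Rightarrow$ (ii), where the key geometric fact is $1-rx\asymp(1-r)+(1-x)\asymp\max\{1-r,1-x\}$, following from $(1-r)(1-x)\ge0$. Splitting $[0,1)$ at the point $r$: on $[r,1)$ one has $\max=1-r$, so the contribution reduces to $(1-r)^{\alpha-\gamma}\int_{[r,1)}(1-x)^{-\alpha}d\mu$, while on $[0,r)$ one has $\max=1-x$, so the contribution reduces to $(1-r)^{\beta}\int_{[0,r)}(1-x)^{-(\gamma+\beta)}d\mu$. Each of these two integrals I would estimate by a dyadic decomposition into the annuli where $1-x\asymp2^{-n}(1-r)$ (resp.\ $1-x\asymp 2^{n}(1-r)$), bounding the $\mu$-mass of each annulus via the Carleson condition $\mu([y,1))\lesssim(1-y)^\gamma$. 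This produces geometric series of ratios $2^{-(\gamma-\alpha)}$ and $2^{-\beta}$; their convergence is precisely where the hypotheses $\gamma>\alpha$ and $\beta>0$ are used, and the two contributions then each come out $\lesssim 1$ uniformly in $r$. I expect the main obstacle to be purely bookkeeping in this decomposition—keeping the exponents straight and verifying that the summed bounds are genuinely independent of $r$—rather than any conceptual difficulty.
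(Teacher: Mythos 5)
The paper does not prove this lemma at all --- it is quoted verbatim from Bao--Wulan \cite{24} --- so there is no internal proof to compare against; I can only assess your argument on its own terms, and it is correct. The three implications are handled the standard way: (ii)$\Leftrightarrow$(iii) via $|1-ax|\ge 1-|a|x$ in one direction and restriction to real $a$ in the other (this works because the integrand in (ii) depends on $a$ only through $|a|$); (ii)$\Rightarrow$(i) by testing at $a=t$ and localizing to $[t,1)$, where you correctly use $\alpha\ge 0$ to get the lower bound $(1-x)^{-\alpha}\ge(1-t)^{-\alpha}$; and (i)$\Rightarrow$(ii) via $1-rx\asymp\max\{1-r,1-x\}$, splitting at $x=r$, and dyadic annuli whose $\mu$-mass is controlled by the Carleson condition, with the geometric ratios $2^{-(\gamma-\alpha)}$ and $2^{-\beta}$ converging exactly under the hypotheses $\gamma>\alpha$ and $\beta>0$. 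Two minor points to tidy in a write-up: in the outer dyadic sum over $[0,r)$ the annuli with $2^{n}(1-r)>1$ are either empty or must be handled by the global bound $\mu([0,1))\lesssim 1$ (which is the Carleson condition at $t=0$), and you are implicitly reading $(1-ax)^{\gamma+\beta-\alpha}$ in (iii) as $|1-ax|^{\gamma+\beta-\alpha}$, which is surely what the (typo-ridden) statement intends, along with $x=t$ as the integration variable.
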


\begin{lemma}\label{le4.3}\cite{22}
If $\gamma>-1,\alpha>0,\beta>0$ with $\alpha+\beta-\gamma-2>0$. Then, for all $a,b\in\mathbb{D}$, we have that
$$
\int_{\mathbb{D}  }^{}\frac{\left ( 1-\left | z \right | ^2 \right )^\gamma}{\left | 1-\bar{a}z  \right |^\alpha\left | 1-\bar{b}z \right |^\beta}dA(z)\lesssim \frac{1}{\left | 1-\bar{a}b  \right | ^{\alpha+\beta-\gamma-2}},\quad \alpha,\beta<2+\gamma;
$$
$$
\int_{\mathbb{D}  }^{}\frac{\left ( 1-\left | z \right | ^2 \right )^\gamma }{\left | 1-\bar{a}z  \right |^\alpha\left | 1-\bar{b}z \right |^\beta}dA(z)\lesssim \frac{\left(1-|a|^2\right)^{2+\gamma-\alpha}}{\left | 1-\bar{a}b  \right | ^\beta},\quad \beta<2+\gamma<\alpha.
$$
\end{lemma}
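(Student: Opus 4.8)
The plan is to reduce the two-kernel integral to the classical one-kernel estimate
$$\int_{\mathbb{D}}\frac{(1-|z|^2)^\gamma}{|1-\bar w z|^{\lambda}}\,dA(z)\asymp\begin{cases}1,&\lambda<2+\gamma,\\ \log\frac{e}{1-|w|^2},&\lambda=2+\gamma,\\ (1-|w|^2)^{2+\gamma-\lambda},&\lambda>2+\gamma,\end{cases}$$
together with its localized versions: over a pole neighborhood $\{|1-\bar w z|<\delta\}$ the integral is $\asymp\delta^{2+\gamma-\lambda}$ when $\lambda<2+\gamma$, and over the complement $\{|1-\bar w z|\ge\delta\}$ it is $\asymp\delta^{2+\gamma-\lambda}$ when $\lambda>2+\gamma$. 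The only genuinely two-point ingredient needed is the elementary inequality $|1-\bar a b|\le|1-\bar a z|+|1-\bar b z|$, valid for all $a,b,z\in\mathbb{D}$; I would deduce it at once from $|z-b|\le|1-\bar b z|$ (since $|1-\bar b z|^2-|z-b|^2=(1-|z|^2)(1-|b|^2)\ge0$) combined with the identity $1-\bar a b=(1-\bar a z)+\bar a(z-b)$.

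Next, setting $\delta=\tfrac12|1-\bar a b|$, I would split $\mathbb{D}=G_a\cup G_b\cup G_0$, where $G_a=\{|1-\bar a z|<\delta\}$, $G_b=\{|1-\bar b z|<\delta\}$, and $G_0$ is the set on which both factors are $\ge\delta$. The inequality above forces $G_a$ and $G_b$ to be disjoint (both factors cannot be smaller than $\delta$ at once), so these three pieces cover $\mathbb{D}$. On $G_a$ one has $|1-\bar b z|\ge\delta$, so I may pull the $b$-factor out as $|1-\bar b z|^{-\beta}\lesssim|1-\bar a b|^{-\beta}$ and am left with the one-kernel integral of $|1-\bar a z|^{-\alpha}$ over the pole neighborhood $G_a$; the situation on $G_b$ is symmetric. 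Feeding in the localized one-kernel estimates then gives, in the first case (both exponents $<2+\gamma$), a contribution $\lesssim|1-\bar a b|^{-\beta}\,\delta^{2+\gamma-\alpha}\asymp|1-\bar a b|^{-(\alpha+\beta-\gamma-2)}$ from $G_a$, and the same order from $G_b$. In the second case ($\beta<2+\gamma<\alpha$) the neighborhood $G_a$ is dominant: there the truncated $\alpha$-integral is comparable to the full one, and $G_a$ returns $(1-|a|^2)^{2+\gamma-\alpha}|1-\bar a b|^{-\beta}$, while $G_b$ yields the lower-order $|1-\bar a b|^{2+\gamma-\alpha-\beta}$.

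The \emph{main obstacle} is the intermediate region $G_0$, where neither factor is small and a careless estimate is off by a logarithm: extracting the full factor $|1-\bar a b|^{-(\alpha+\beta-\gamma-2)}$ leaves a one-kernel integral at the critical exponent $2+\gamma$, whose value is $\log\frac{e}{1-|w|^2}$ rather than $O(1)$. I would avoid this by decomposing $G_0$ dyadically according to $\rho:=\max(|1-\bar a z|,|1-\bar b z|)\asymp 2^k\delta$. For $\rho\gg\delta$ the two factors become comparable, $|1-\bar a z|\asymp|1-\bar b z|\asymp\rho$, and the integral of $(1-|z|^2)^\gamma$ over the shell $\{|1-\bar a z|\asymp\rho\}$ is $\asymp\rho^{\gamma+2}$, so each shell contributes $\asymp\rho^{2+\gamma-\alpha-\beta}$. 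Here the hypothesis $\alpha+\beta-\gamma-2>0$ is exactly what makes this exponent negative, so the geometric series $\sum_k(2^k\delta)^{2+\gamma-\alpha-\beta}$ converges and is dominated by its first term $\delta^{2+\gamma-\alpha-\beta}\asymp|1-\bar a b|^{-(\alpha+\beta-\gamma-2)}$; no logarithm survives. In the second case the same shell sum is controlled against $(1-|a|^2)^{2+\gamma-\alpha}|1-\bar a b|^{-\beta}$ using $1-|a|^2\lesssim|1-\bar a b|$. Adding the three regional bounds then produces the two stated estimates; the argument is self-contained modulo the classical one-kernel estimate quoted above.
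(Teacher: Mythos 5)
The paper does not prove this lemma at all: it is quoted verbatim from Ortega--F\'abrega \cite{22} and used as a black box, so there is no internal proof to compare against. Your blind argument is, as far as I can check, a correct self-contained derivation (modulo the classical one-kernel Forelli--Rudin estimate) along the standard lines: the triangle inequality $|1-\bar a b|\le|1-\bar a z|+|1-\bar b z|$, the disjoint pole neighborhoods $G_a,G_b$ of radius $\delta=\frac12|1-\bar a b|$, and a dyadic shell sum on the remainder whose convergence is exactly the hypothesis $\alpha+\beta-\gamma-2>0$; the domination of the $G_b$ and $G_0$ contributions in the second case by the stated bound via $1-|a|^2\lesssim|1-\bar a b|$ is also right. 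Two cosmetic points: the localized one-kernel estimates you quote should be stated as upper bounds $\lesssim$ rather than $\asymp$ (the lower bounds can fail, e.g.\ when $\delta<1-|w|$, but only upper bounds are used), and the claim that both factors are comparable on the outer shells deserves its one-line justification, namely $\bigl||1-\bar a z|-|1-\bar b z|\bigr|\le|a-b|\le|1-\bar a b|=2\delta$. Neither affects correctness; your write-up could serve as a proof of the cited lemma.
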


\begin{theorem}\label{th3.2}
Let $\mu$ be a positive measure on $[0,1)$ which satisfies the condition in Theorem \ref{th-2.2}. Then  $\mathcal{DH}_{\mu}: BMOA\rightarrow BMOA$ is bounded if and only if  $\mu$ is a $1$-logarithmic $2$-Carleson measure.
\end{theorem}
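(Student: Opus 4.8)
My plan is to lean on Lemma \ref{th-2.2}, which under the standing hypotheses identifies $\mathcal{DH}_\mu$ with the integral operator $\mathcal{I}_{\mu_2}$, so that throughout I may differentiate under the integral,
\[(\mathcal{I}_{\mu_2}f)'(z)=2\int_{[0,1)}\frac{t\,f(t)}{(1-tz)^3}\,d\mu(t),\]
and measure membership in $BMOA$ through the seminorm $\|g\|_*^2=\sup_{a\in\mathbb D}\int_{\mathbb D}|g'(z)|^2(1-|\varphi_a(z)|^2)\,dA(z)$ from the definition. Writing $d\lambda_a(z)=(1-|\varphi_a(z)|^2)\,dA(z)=\frac{(1-|a|^2)(1-|z|^2)}{|1-\bar a z|^2}\,dA(z)$, the whole problem reduces to controlling $\sup_a\int_{\mathbb D}|(\mathcal{I}_{\mu_2}f)'|^2\,d\lambda_a$.

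For necessity I would test the operator on the family $f_a(z)=\log\frac{e}{1-\bar a z}$, $a\in\mathbb D$. First I would check these are uniformly bounded in $BMOA$: since $f_a'(z)=\bar a/(1-\bar a z)$, the first estimate of Lemma \ref{le4.3} (with $\gamma=1$, $\alpha=\beta=2$) gives $\int_{\mathbb D}\frac{(1-|z|^2)}{|1-\bar a z|^2|1-\bar b z|^2}\,dA(z)\lesssim |1-\bar b a|^{-1}$, whence $\|f_a\|_*^2\lesssim\sup_b\frac{1-|b|^2}{|1-\bar a b|}\lesssim 1$. Assuming $\mathcal{DH}_\mu$ bounded we get $\|\mathcal{I}_{\mu_2}f_a\|_{BMOA}\lesssim 1$, and combined with $BMOA\subset\mathcal B$ this yields $(1-|z|^2)|(\mathcal{I}_{\mu_2}f_a)'(z)|\lesssim 1$. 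Evaluating at the real point $z=a=r$ and discarding all but the mass on $[r,1)$ (where $t\ge r$ and $1-rt\le 1-r^2$, so every factor is bounded below) produces $\frac{r\,\log\frac{e}{1-r^2}}{(1-r^2)^2}\mu([r,1))\lesssim 1$. Since $1-r^2\asymp 1-r$ and $\log\frac{e}{1-r^2}\asymp\log\frac{e}{1-r}$ as $r\to 1^-$, this is precisely the $1$-logarithmic $2$-Carleson condition, the range of $r$ away from $1$ being trivial because $\mu$ is finite.

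For sufficiency I would avoid squaring the integral directly, which would create an intractable three-factor integral, and instead apply Minkowski's integral inequality in $L^2(d\lambda_a)$:
\[\Big(\int_{\mathbb D}|(\mathcal{I}_{\mu_2}f)'|^2\,d\lambda_a\Big)^{1/2}\lesssim\int_{[0,1)}t\,|f(t)|\Big(\int_{\mathbb D}\frac{d\lambda_a(z)}{|1-tz|^6}\Big)^{1/2}d\mu(t).\]
The inner integral is estimated by the second case of Lemma \ref{le4.3} (with $\gamma=1$, the exponent $6$ on the real point $t$ and $2$ on $a$, so $\beta=2<2+\gamma<\alpha=6$), giving $\int_{\mathbb D}|1-tz|^{-6}\,d\lambda_a\lesssim \frac{1-|a|^2}{(1-t^2)^3|1-ta|^2}$. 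Using the Bloch growth $|f(t)|\lesssim\|f\|_{BMOA}\log\frac{e}{1-t}$ then reduces everything to showing
\[\sup_{a\in\mathbb D}(1-|a|^2)^{1/2}\int_{[0,1)}\frac{\log\frac{e}{1-t}}{(1-t)^{3/2}\,|1-ta|}\,d\mu(t)<\infty.\]

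The heart of the matter, and the step I expect to be the main obstacle, is recognizing this last supremum as a Carleson condition for the auxiliary measure $d\sigma(t)=\log\frac{e}{1-t}\,d\mu(t)$ and verifying that the $1$-logarithmic $2$-Carleson hypothesis on $\mu$ makes $\sigma$ an ordinary $2$-Carleson measure. I would establish the latter by integration by parts: with $M(s)=\mu([s,1))\lesssim (1-s)^2/\log\frac{e}{1-s}$, one obtains $\sigma([t,1))=\log\frac{e}{1-t}\,M(t)+\int_t^1\frac{M(s)}{1-s}\,ds\lesssim (1-t)^2$, the logarithm in the hypothesis being exactly what cancels the logarithmic loss incurred by the Bloch growth bound (this is why the sharp condition carries the extra logarithm rather than being plain $2$-Carleson). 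With $\sigma$ shown to be $2$-Carleson, the displayed supremum is finite by Lemma \ref{le4.1111}(iii) upon matching $\beta=\tfrac12$, $\alpha=\tfrac32$, $\gamma=2$ (so $\gamma+\beta-\alpha=1$), which bounds $\|\mathcal{I}_{\mu_2}f\|_*$; the remaining term $|(\mathcal{I}_{\mu_2}f)(0)|\le\int|f|\,d\mu\lesssim\|f\|_{BMOA}\int_{[0,1)}\log\frac{e}{1-t}\,d\mu(t)$ is finite by the standing hypothesis, giving boundedness on all of $BMOA$.
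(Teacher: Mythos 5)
Your proposal is correct, and the sufficiency half is essentially the paper's argument: the same Minkowski-in-$L^2(d\lambda_a)$ step, the same second Forelli--Rudin estimate from Lemma \ref{le4.3} giving $\frac{1-|a|^2}{(1-t^2)^3|1-ta|^2}$, and the same reduction to Lemma \ref{le4.1111} for the auxiliary measure $d\nu(t)=\log\frac{e}{1-t}\,d\mu(t)$; the only cosmetic difference is that you prove $\nu$ is a $2$-Carleson measure by integration by parts where the paper cites Proposition 2.5 of \cite{9}. The necessity half, however, takes a genuinely different route. The paper sets up the duality $(H^1)^\ast\cong BMOA$, justifies via Fubini and the Cauchy representation the pairing identity $\frac{1}{2\pi}\int_0^{2\pi}\overline{\mathcal{DH}_\mu(f)(re^{i\theta})}\,g(e^{i\theta})\,d\theta=\int_{[0,1)}\overline{f(t)}\left(g(rt)+rtg'(rt)\right)d\mu(t)$, and then tests with the pair $f_b(z)=\log\frac{e}{1-bz}\in BMOA$ and $g_b(z)=\frac{1-b^2}{(1-bz)^2}\in H^1$ to extract $\frac{\log\frac{e}{1-b^2}}{(1-b^2)^2}\mu([b,1))\lesssim 1$. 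You instead keep only the test family $f_a$, use the continuous embedding $BMOA\subset\mathcal B$ to turn the conclusion $\|\mathcal I_{\mu_2}f_a\|_{BMOA}\lesssim 1$ into the pointwise bound $(1-|z|^2)\left|(\mathcal I_{\mu_2}f_a)'(z)\right|\lesssim 1$, and evaluate at $z=a=r$ where the integrand is positive. Both arguments are sound and yield the same inequality (your derivative picks up a factor $(1-r^2)^{-3}$, tempered by the Bloch weight $(1-r^2)$, giving exactly $(1-r^2)^{-2}$). Your version is more elementary in that it dispenses with the $H^1$ test family and the duality/Fubini machinery altogether; the paper's duality formulation has the side benefit of recasting boundedness as the bilinear estimate (2.5), which it then reuses verbatim in Theorems \ref{TH4.1}, \ref{th3.6} and \ref{th3.8}.
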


\begin{proof}
Since $BMOA\varsubsetneq\mathcal{B}$ and the fact that
$$
|f(z)|\lesssim\|f\|_\mathcal{B}\log \frac{e}{1-|z|},\quad z\in\mathbb{D}
$$
for any $f\in\mathcal{B}$, we obtain  that
\begin{align}\label{eqn-2.1}
\int_{[0,1)}^{} \left | f(t) \right | d\mu(t)&\lesssim \left \| f \right \| _{\mathcal{B}}\int_{[0,1)}^{} \log \frac{e}{1-t}  d\mu(t)\notag\\
&\lesssim\left \| f \right \| _{BMOA}\int_{[0,1)}^{} \log \frac{e}{1-t}d\mu(t)<\infty, \quad f\in BMOA.
\end{align}
Whenever $0<r<1, f\in BMOA$ and $g\in H^1$, we have that
\begin{align}\label{eqn-3.2}
\int_{0}^{2\pi } \int_{[0,1)}^{} \left | \frac{ f(t)g\left ( e^{i\theta } \right ) }{\left ( 1- re^{i\theta } t \right ) ^2}  \right | d\mu (t)d\theta &\le\frac{1}{\left ( 1-r \right ) ^2}\int_{[0,1)}^{}\left | f(t) \right |d\mu(t)\int_{0}^{2\pi }  \left | g\left ( e^{i\theta } \right )  \right |  d\theta\notag\\
&\lesssim\frac{\left \| g \right \|_{H^1} }{\left ( 1-r \right ) ^2} <\infty .
\end{align}
Using this, together with Fubini's theorem, and Cauchy's integral representation of $H^1$\cite{5}, we conclude that whenever $0<r<1, f\in BMOA$ and $g\in H^1$,
\begin{align}\label{eqn-2.4}
   \frac{1}{2\pi}\int_{0}^{2\pi } \overline{{\mathcal{DH}_{\mu}(f)\left ( re^{i\theta } \right ) }} g\left ( e^{i\theta } \right )d\theta&=\frac{1}{2\pi} \int_{0}^{2\pi } \left ( \int_{[0,1)}^{} \frac{\overline{f(t)}d\mu(t) }{\left ( 1-tre^{-i\theta } \right )^2 }  \right ) g\left ( e^{i\theta } \right )d\theta\notag\\
&=\frac{1}{2\pi}\int_{[0,1)}^{}\overline{f(t)}\int_{0}^{2\pi}\frac{g\left ( e^{i\theta } \right ) }{\left ( 1-tre^{-i\theta }\right )^2}d\theta d\mu(t)\notag\\
&= \int_{[0,1)}^{}\overline{f(t)}{\left ( g\left ( rt \right ) rt \right )}'  d\mu(t)\notag\\
&= \int_{[0,1)}^{}\overline{f(t)}\left ( g\left ( rt \right )+ rt{g}'\left ( rt \right )   \right )  d\mu(t).\
\end{align}

 Recall the duality relation $(H^1)^\ast\cong BMOA$(see \cite{8}), under the pairing
$$
<F,G>=\lim_{r\to1-}\frac{1}{2\pi}\int_{0}^{2\pi}\overline{F\left(re^{i\theta}\right)}G\left(e^{i\theta}\right)d\theta, \quad F\in BMOA,G\in H^1.
$$
This, and using (\ref{eqn-2.4}), it is easy to see that $\mathcal{DH}_{\mu}: BMOA\rightarrow BMOA$ is bounded if and only if
\begin{align}\label{eqn4.4}
\left | \int_{[0,1)}^{}\overline{f(t)}\left ( g\left ( rt \right )+ rt{g}'\left ( rt \right )   \right )  d\mu(t) \right | \lesssim\left \| f \right \|_{BMOA}\left \| g \right \|_{H^1  }, 0\le r<1,f\in BMOA,g\in H^1.
\end{align}

Suppose that $\mathcal{DH}_{\mu}: BMOA\rightarrow BMOA$ is bounded. For $0<b<1$, we set
$$
f_b(z)=\log \frac{e}{1-bz},\quad g_b(z)=\frac{1-b^2}{\left ( 1-bz \right )^2  }\quad z\in\mathbb{D}.
$$
Then $f_b(z)\in BMOA$, $g_b(z)\in H^1$ and
$$
\sup\limits_{ b\in(0,1)}\left \| f_b \right \|_{BMOA }\lesssim1 \quad and \quad \sup\limits_{ b\in(0,1)}\left \| g_b\right \|_{H^1 }\lesssim1.
$$
Then
$$
\begin{aligned}
1&\gtrsim\sup\limits_{ b\in(0,1)}\left \| f_b \right \|_{BMOA}\sup\limits_{ b\in(0,1)}\left \| g_b \right \|_{H^1 }\\
&\gtrsim \left | \int_{[0,1)}^{}\overline{f_b(t)}\left ( g_b\left ( rt \right )+ rt{g}'_b\left ( rt \right )   \right )d\mu(t) \right |\\
&\gtrsim \int_{[b,1)}^{}\log\frac{e}{1-bt}\left (\frac{1-b^2}{\left ( 1-brt \right )^2  }+2br^2t\frac{1-b^2}{\left ( 1-brt \right )^3  }  \right )d\mu(t)\\
&\gtrsim \frac{\log \frac{e}{1-b^2}}{ \left ( 1-b^2 \right ) ^2 }\mu([b,1)).
\end{aligned}
$$
Therefore, we conclude that $\mu$ is a $1$-logarithmic $2$-Carleson measure.

On the contrary, suppose that $\mu$ is a $1$-logarithmic $2$-Carleson measure. Let $\nu$ be the Borel measure on $[0,1)$ defined by $d\nu(t)=\log\frac{e}{1-t}d\mu(t)$, which is a $2$-Carleson measure by Proposition 2.5 in \cite{9}. Take $f(z)=\sum_{k=0}^{\infty}a_kz^k\in BMOA$, for any $z\in\mathbb{D}$, using (\ref{eqn-2.1}) we have that
$$
\begin{aligned}
&\left \| \mathcal{DH}_\mu(f)  \right \| _{BMOA}\\
=&\sup\limits_{ a\in\mathbb{D}}\left ( \int_{\mathbb{D} }^{}\left | \int_{[0,1)}^{} \frac{2tf(t)}{(1-tz)^3}d\mu(t)\right |^2\left ( 1-\left | \varphi _a(z) \right |^2 \right )dA(z)   \right )^\frac{1}{2}\\
\lesssim&\left \| f \right \|_{BMOA }\sup\limits_{ a\in\mathbb{D}}\left ( \int_{\mathbb{D} }^{}\left ( \int_{[0,1)}^{} \frac{\log\frac{e}{1-t}}{\left | 1-tz \right | ^3}d\mu(t)\right )^2\left ( 1-\left | \varphi _a(z) \right |^2 \right ) dA(z)   \right )^\frac{1}{2}.\\
\end{aligned}
$$
Using Minkowski's inequality, Lemma \ref{le4.3} and  Lemma \ref{le4.1111}, we have
$$
\begin{aligned}
&\sup\limits_{ a\in\mathbb{D}}\left ( \int_{\mathbb{D} }^{}\left (\int_{[0,1)}^{} \frac{\log\frac{e}{1-t}}{\left | 1-tz \right | ^3}d\mu(t)\right )^2\left ( 1-\left | \varphi _a(z) \right |^2 \right ) dA(z)   \right )^\frac{1}{2}\\
=&\sup\limits_{ a\in\mathbb{D}}\left ( \int_{\mathbb{D} }^{}\left (\int_{[0,1)}^{} \frac{1}{\left | 1-tz \right | ^3}d\nu(t)\right )^2\left ( 1-\left | \varphi _a(z) \right |^2 \right ) dA(z)   \right )^\frac{1}{2}\\
\lesssim&\sup\limits_{ a\in\mathbb{D}} \int_{[0,1)}^{}\left(\int_{\mathbb{D} }^{}\frac{1}{\left | 1-tz \right | ^6}\left (1-\left | \varphi _a(z) \right |^2 \right )dA(z) \right )^\frac{1}{2}d\nu(t)\\
\lesssim&\left ( 1-\left | a \right |^2  \right ) ^\frac{1}{2}\sup\limits_{ a\in\mathbb{D}} \int_{[0,1)}^{}\left(\int_{\mathbb{D} }^{}\frac{1-\left | z \right |^2  }{\left | 1-tz \right | ^6\left | 1-\bar{a}z  \right | ^2}dA(z) \right )^\frac{1}{2}d\nu(t)\\
\lesssim&\sup\limits_{ a\in\mathbb{D}} \int_{[0,1)}^{}\frac{\left ( 1-\left | a \right |^2  \right ) ^\frac{1}{2}}{\left ( 1-t^2 \right ) ^\frac{3}{2}\left | 1-ta  \right |}d\nu(t)<\infty.
\end{aligned}
$$
Consequently, we deduce that $\mathcal{DH}_{\mu}: BMOA\to BMOA$ is bounded.
\end{proof}

\section{$\mathcal{DH}_{\mu }$ acting from $\mathcal{B}_\alpha$ spaces to BMOA spaces }
\par
In this section, we aim to study the measures $\mu$ for which $\mathcal{DH}_{\mu }$ is a bounded  operator from  $\mathcal{B}_\alpha(\alpha>0)$ into $BMOA$.

\begin{lemma}\label{Lm1.1}\cite{19} 
Suppose that $\alpha>0$. Then the following statements hold:

(i) If $0<\alpha<1$, then $f\in\mathcal{B}_\alpha$ are bounded$;$

(ii) If $\alpha=1$, then $|f(z)|\lesssim\log \frac{e}{1-\left | z \right |}\|f\|_{\mathcal{B}};$

(iii) If $\alpha>1$, then $f\in\mathcal{B}_\alpha$ if and only if $\left | f\left ( z \right )  \right | =O\left((1-|z|^2)^{1-\alpha}\right)$.
\end{lemma}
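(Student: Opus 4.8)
The plan is to deduce the pointwise size of $f$ from the defining quantity $\sup_{z\in\mathbb{D}}(1-|z|^2)^\alpha|f'(z)|$ by integrating $f'$ along a radius, and, for the converse implication in (iii), to recover a bound on $f'$ from one on $f$ through the Cauchy integral formula. For all three parts I would begin from the identity obtained by integrating along the segment from $0$ to $z=|z|e^{i\theta}$,
$$
f(z)=f(0)+\int_0^{|z|}f'(se^{i\theta})e^{i\theta}\,ds,
$$
which gives the basic bound
$$
|f(z)|\le|f(0)|+\int_0^{|z|}|f'(se^{i\theta})|\,ds\le|f(0)|+\|f\|_{\mathcal{B}_\alpha}\int_0^{|z|}\frac{ds}{(1-s^2)^\alpha}.
$$

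Everything now reduces to the elementary behaviour of $\int_0^{r}(1-s^2)^{-\alpha}\,ds$ as $r\to1^-$. Comparing $(1-s^2)^{-\alpha}$ with $(1-s)^{-\alpha}$ up to factors bounded on $[0,1)$, this integral stays bounded when $0<\alpha<1$, grows like $\log\frac{1}{1-r}\asymp\log\frac{e}{1-r}$ when $\alpha=1$, and grows like $(1-r)^{1-\alpha}\asymp(1-r^2)^{1-\alpha}$ when $\alpha>1$. Inserting these three estimates into the bound above yields respectively part (i), part (ii), and the necessity direction of (iii), namely that $f\in\mathcal{B}_\alpha$ forces $|f(z)|=O((1-|z|^2)^{1-\alpha})$.

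It remains to treat the converse in (iii): assuming $|f(z)|\le C(1-|z|^2)^{1-\alpha}$, I must show $f\in\mathcal{B}_\alpha$. Here I would fix $z$ and apply the Cauchy estimate on the circle $|w-z|=\rho$ with $\rho=\tfrac{1}{2}(1-|z|)$, which gives $|f'(z)|\le\rho^{-1}\max_{|w-z|=\rho}|f(w)|$. On that circle $1-|w|\asymp1-|z|$, so the hypothesis yields $\max_{|w-z|=\rho}|f(w)|\lesssim(1-|z|)^{1-\alpha}$, whence $|f'(z)|\lesssim(1-|z|)^{-\alpha}$ and therefore $(1-|z|^2)^\alpha|f'(z)|\lesssim1$. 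This converse is the only genuinely delicate point: the radius $\rho$ must be taken proportional to $1-|z|$ so that the Cauchy estimate loses exactly one power of $1-|z|$ while the comparison $1-|w|\asymp1-|z|$ remains valid over the whole circle; the forward estimates in (i)--(iii) are routine once the radial-integration bound is in hand.
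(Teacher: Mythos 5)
Your argument is correct and complete. The paper gives no proof of this lemma, citing it to Zhu's paper on Bloch type spaces [19]; your radial-integration bound for (i), (ii) and the forward half of (iii), together with the Cauchy estimate on a disc of radius comparable to $1-|z|$ for the converse of (iii), is exactly the standard argument found in that reference, and all the estimates (boundedness of $\int_0^1(1-s)^{-\alpha}ds$ for $\alpha<1$, the logarithmic growth for $\alpha=1$, the comparison $1-|w|\asymp 1-|z|$ on the circle $|w-z|=\tfrac12(1-|z|)$) check out.
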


\begin{lemma}\label{lm4.2}\cite{20}  
Suppose that $0<\alpha<\infty$, and let $\mu$ be a positive Borel measure on $[0,1)$.
\begin{itemize}
			\item [$(i)$] If $0<\alpha<1$, then for any given $f\in\mathcal{B}_\alpha$, the integral  $\mathcal{I}_{{\mu}_2 }\left ( f \right ) \left ( z \right ) =\int_{[0,1)}^{} \frac{f\left ( t \right ) }{\left ( 1-tz \right )^2  }d\mu \left ( t \right )$  defined a well defined analytic function in $\mathbb{D}$ if and only if the measure $\mu$ is finite;

	\item [$(ii)$] If $\alpha=1$, then  for any given $f\in\mathcal{B}_\alpha$, the integral  $\mathcal{I}_{{\mu}_2 }\left ( f \right ) \left ( z \right ) =\int_{[0,1)}^{} \frac{f\left ( t \right ) }{\left ( 1-tz \right )^2  }d\mu \left ( t \right )$  defined a well defined analytic function in $\mathbb{D}$ if and only if the measure satisfies $\int_{[0,1)}\log\frac{e}{1-t}d\mu(t)<\infty;$
			\item [$(iii)$] If $\alpha>1$, then for any given $f\in\mathcal{B}_\alpha$, the integral  $\mathcal{I}_{{\mu}_2 }\left ( f \right ) \left ( z \right ) =\int_{[0,1)}^{} \frac{f\left ( t \right ) }{\left ( 1-tz \right )^2  }d\mu \left ( t \right )$  defined a well defined analytic function in $\mathbb{D}$  if and only if the measure satisfies $\int_{[0,1)}\frac{1}{(1-t)^{\beta-1}}d\mu(t)<\infty.$
		\end{itemize}

\end{lemma}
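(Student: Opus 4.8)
The plan is to reduce the question of well-definedness and analyticity of $\mathcal{I}_{\mu_2}(f)$ to a single scalar estimate, namely the finiteness of $\int_{[0,1)}|f(t)|\,d\mu(t)$, and then to read off each of the three regimes from the corresponding growth bound in Lemma \ref{Lm1.1}. First I would fix a compact set $K=\{z:|z|\le r\}\subset\mathbb{D}$. For $t\in[0,1)$ and $z\in K$ one has $|1-tz|\ge 1-t|z|\ge 1-r>0$, so that
$$
\int_{[0,1)}\frac{|f(t)|}{|1-tz|^2}\,d\mu(t)\le\frac{1}{(1-r)^2}\int_{[0,1)}|f(t)|\,d\mu(t),\qquad z\in K.
$$
Hence, once $\int_{[0,1)}|f(t)|\,d\mu(t)<\infty$, the defining integral converges absolutely and uniformly on every compact subset of $\mathbb{D}$. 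Since for each fixed $t$ the map $z\mapsto(1-tz)^{-2}$ is analytic in $\mathbb{D}$, Fubini's theorem (legitimate by the bound above) together with Cauchy's theorem gives $\oint_{\partial T}\mathcal{I}_{\mu_2}(f)(z)\,dz=0$ for every triangle $T\subset\mathbb{D}$, and Morera's theorem then yields analyticity. Thus the lemma becomes the assertion that the stated measure conditions are precisely those guaranteeing $\int_{[0,1)}|f(t)|\,d\mu(t)<\infty$ for every $f\in\mathcal{B}_\alpha$.

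For sufficiency I would invoke Lemma \ref{Lm1.1} case by case. When $0<\alpha<1$ every $f\in\mathcal{B}_\alpha$ is bounded, so $\int_{[0,1)}|f(t)|\,d\mu(t)\le\|f\|_\infty\,\mu([0,1))$, finite precisely when $\mu$ is finite. When $\alpha=1$ the estimate $|f(t)|\lesssim\|f\|_{\mathcal{B}}\log\frac{e}{1-t}$ gives $\int_{[0,1)}|f(t)|\,d\mu(t)\lesssim\|f\|_{\mathcal{B}}\int_{[0,1)}\log\frac{e}{1-t}\,d\mu(t)$, finite under the logarithmic condition. When $\alpha>1$ the growth bound $|f(t)|=O\!\left((1-t^2)^{1-\alpha}\right)$ gives $\int_{[0,1)}|f(t)|\,d\mu(t)\lesssim\int_{[0,1)}(1-t)^{1-\alpha}\,d\mu(t)=\int_{[0,1)}\frac{1}{(1-t)^{\alpha-1}}\,d\mu(t)$, finite under the stated condition (so the exponent $\beta$ appearing in the statement should read $\alpha$).

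For necessity it suffices to test the operator at the single point $z=0$, where $\mathcal{I}_{\mu_2}(f)(0)=\int_{[0,1)}f(t)\,d\mu(t)$, feeding in a nonnegative extremal member of each Bloch space. I would take $f\equiv1$ when $0<\alpha<1$, which forces $\mu([0,1))<\infty$; $f(z)=\log\frac{e}{1-z}$ when $\alpha=1$, which lies in $\mathcal{B}$ and forces $\int_{[0,1)}\log\frac{e}{1-t}\,d\mu(t)<\infty$; and $f(z)=(1-z)^{1-\alpha}$ when $\alpha>1$, which lies in $\mathcal{B}_\alpha$ and forces $\int_{[0,1)}(1-t)^{1-\alpha}\,d\mu(t)<\infty$. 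In each case the chosen function is positive on $[0,1)$, so finiteness of $\mathcal{I}_{\mu_2}(f)(0)$ coincides with convergence of the relevant weighted measure integral, and the converse follows.

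The individual steps are short, and the only places needing care are two routine but genuine verifications: that each proposed test function really belongs to its Bloch space (a direct computation of $(1-|z|^2)^\alpha|f'(z)|$, using $|1-z|\ge 1-|z|$), and that the interchange of the contour integral and the measure integral in the Morera argument is justified — both handled by the uniform control of $|1-tz|^{-2}$ over compacta. The main conceptual point, such as it is, is simply matching the boundary growth rate of $\mathcal{B}_\alpha$ functions to the correct power of $(1-t)$ in the measure condition; once Lemma \ref{Lm1.1} is available this matching is immediate, and the equivalence in all three regimes follows at once.
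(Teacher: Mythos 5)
Your argument is correct, and in fact the paper offers no proof to compare against: Lemma \ref{lm4.2} is quoted verbatim from reference \cite{20} (Ye--Zhou) as a known result. Your reduction to the finiteness of $\int_{[0,1)}|f(t)|\,d\mu(t)$ via the uniform bound $|1-tz|^{-2}\le(1-r)^{-2}$ on compacta, the case-by-case use of the growth estimates in Lemma \ref{Lm1.1} for sufficiency, and the evaluation at $z=0$ against the nonnegative test functions $1$, $\log\frac{e}{1-z}$, $(1-z)^{1-\alpha}$ for necessity is exactly the standard route such lemmas are proved in this literature; you are also right that the exponent $\beta$ in part (iii) is a typo for $\alpha$.
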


\begin{lemma}\label{Th2.2}\cite{20}
Suppose $0<\alpha<\infty$ and let $\mu$ be a positive Borel measure on $[0,1)$. Then (\ref{eqn1.1}) is a well defined analytic function in $\mathbb{D}$ and $\mathcal{DH}_{\mu }= \mathcal{I}_{{\mu}_2 }$ for every $f\in\mathcal{B}_\alpha$ in the two following cases:
\par
(i) measure $\mu$ is a $s$-Carleson measure for some $s>0$ if $0<\alpha\le1$;

(ii) measure $\mu$ is an $\alpha$-Carleson measure if $\alpha>1$.
\end{lemma}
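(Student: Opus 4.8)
The plan is to read off both operators as power series and match Taylor coefficients, the bridge being the kernel expansion $\frac{1}{(1-tz)^2}=\sum_{n=0}^{\infty}(n+1)t^{n}z^{n}$, valid for $t\in[0,1)$ and $z\in\mathbb{D}$. Two things must be done: (A) show that $\mathcal{I}_{\mu_2}(f)$ is a well-defined analytic function under the stated Carleson hypotheses, and (B) show that the $n$-th Taylor coefficient of $\mathcal{I}_{\mu_2}(f)$ equals $(n+1)\sum_{k=0}^{\infty}\mu_{n,k}a_{k}$, which is by definition the $n$-th coefficient of $\mathcal{DH}_\mu(f)$. Once (A) and (B) are in hand the identity $\mathcal{DH}_\mu(f)=\mathcal{I}_{\mu_2}(f)$ is immediate, since two analytic functions with equal Taylor coefficients coincide.

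For (A), I would first verify that each Carleson hypothesis implies the corresponding finiteness condition in Lemma \ref{lm4.2}, after which that lemma delivers the analyticity of $\mathcal{I}_{\mu_2}(f)$. All three implications follow from integrating against the distribution function $\mu([t,1))$: for $g$ nonnegative and increasing, $\int_{[0,1)}g\,d\mu=g(0)\mu([0,1))+\int_{0}^{1}g'(t)\mu([t,1))\,dt$. When $0<\alpha<1$ an $s$-Carleson measure is finite (evaluate $\mu([t,1))\le C(1-t)^{s}$ at $t=0$); when $\alpha=1$, taking $g(t)=\log\frac{e}{1-t}$ gives $\int_{[0,1)}\log\frac{e}{1-t}\,d\mu\lesssim\int_{0}^{1}(1-t)^{s-1}\,dt<\infty$; when $\alpha>1$, taking $g(t)=(1-t)^{1-\alpha}$ together with the $\alpha$-Carleson bound gives $\int_{[0,1)}(1-t)^{1-\alpha}\,d\mu\lesssim\int_{0}^{1}dt<\infty$. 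In each case this same estimate, combined with the pointwise bounds of Lemma \ref{Lm1.1}, also yields $|f|\in L^{1}(d\mu)$ for $f\in\mathcal{B}_\alpha$, a fact I will use repeatedly.

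For (B), fix $z$ with $|z|<1$. Since $\int_{[0,1)}\frac{|f(t)|}{(1-t|z|)^{2}}\,d\mu(t)\le(1-|z|)^{-2}\int_{[0,1)}|f|\,d\mu<\infty$, Fubini's theorem justifies inserting the kernel expansion and interchanging the sum with the integral, giving $\mathcal{I}_{\mu_2}(f)(z)=\sum_{n=0}^{\infty}(n+1)z^{n}\,c_{n}$ with $c_{n}=\int_{[0,1)}f(t)t^{n}\,d\mu(t)$. It remains to identify $c_{n}$ with $\sum_{k}\mu_{n,k}a_{k}$. The clean device is to dilate: the standard Cauchy estimate $|a_{k}|\lesssim k^{\alpha-1}$ makes $\sum_{k}|a_{k}|\rho^{k}\mu_{n+k}<\infty$ for every $0\le\rho<1$, so one may legitimately interchange there to obtain $\sum_{k=0}^{\infty}\mu_{n,k}a_{k}\rho^{k}=\int_{[0,1)}t^{n}f(\rho t)\,d\mu(t)$. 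Letting $\rho\to1^{-}$, the right-hand side tends to $c_{n}$ by dominated convergence: the Lemma \ref{Lm1.1} bounds are increasing in $|z|$, so $|f(\rho t)|$ is dominated uniformly in $\rho$ by an $L^{1}(d\mu)$ function.

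The main obstacle is precisely this final passage. The crude absolute estimate $\sum_{k}|a_{k}|\mu_{n+k}\lesssim\sum_{k}k^{\alpha-1}(n+k)^{-s}$ is only borderline convergent (indeed log-divergent when $s=\alpha$, the situation of case (ii)), so $\sum_{k}\mu_{n,k}a_{k}$ cannot be handled by brute force and the cancellation carried by the true values $f(t)$ is essential. This is why I route the argument through the dilation parameter $\rho$: the limit $\rho\to1^{-}$ exhibits $c_{n}$ as the Abel sum of $\sum_{k}\mu_{n,k}a_{k}$, and the remaining delicate step is to promote this Abel limit to ordinary convergence of the defining series (via Abel's theorem once convergence is established, or by reading the definition of $\mathcal{DH}_\mu$ through exactly this limit). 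Everything else, namely the three Carleson-to-finiteness implications and the single Fubini interchange, is routine given Lemmas \ref{lm4.2} and \ref{Lm1.1}.
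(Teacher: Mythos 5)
The paper itself offers no proof of this lemma: it is quoted from reference \cite{20}, so there is no internal argument to compare yours against line by line. Judged on its own terms, the routine parts of your proposal are sound: the verification that each Carleson hypothesis implies the corresponding finiteness condition of Lemma \ref{lm4.2}, the resulting integrability $\int_{[0,1)}|f|\,d\mu<\infty$ via Lemma \ref{Lm1.1}, the Fubini interchange giving $\mathcal{I}_{\mu_2}(f)(z)=\sum_{n}(n+1)c_nz^n$ with $c_n=\int_{[0,1)}t^nf(t)\,d\mu(t)$, and the dilation identity $\sum_{k}\mu_{n,k}a_k\rho^{k}=\int_{[0,1)}t^{n}f(\rho t)\,d\mu(t)$ together with its limit $c_n$ as $\rho\to1^{-}$ are all correctly justified.

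The genuine gap is the one you flag yourself and then do not close. The assertion that (\ref{eqn1.1}) is \emph{well defined} means precisely that the inner series $\sum_{k=0}^{\infty}\mu_{n,k}a_k$ converges in the ordinary sense for every $n$; your argument only shows that it is Abel summable to $c_n$. Abel summability does not imply convergence, and your proposed repair --- ``via Abel's theorem once convergence is established'' --- assumes exactly what has to be proved. Nor can the gap be closed by the absolute estimate you mention: in case (ii) one has $|a_k|\lesssim k^{\alpha-1}$ and $\mu_{n+k}\lesssim(n+k)^{-\alpha}$, so the majorant behaves like $\sum_k k^{-1}$, and the equivalent integral form $\sum_k|a_k|\mu_{n+k}=\int_{[0,1)}t^n\bigl(\sum_k|a_k|t^k\bigr)\,d\mu(t)\lesssim\int_{[0,1)}(1-t)^{-\alpha}\,d\mu(t)$ is likewise infinite for a general $\alpha$-Carleson measure; a similar borderline failure occurs in case (i) when $s\le\alpha$. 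So the convergence of $\sum_k\mu_{n,k}a_k$ --- which is the actual content of the cited lemma, everything else being bookkeeping --- requires an input your proposal does not supply, for instance a summation-by-parts argument exploiting the monotonicity of $k\mapsto\mu_{n+k}$, or a dominated-convergence argument applied to the partial sums via $\sum_{k\le K}\mu_{n,k}a_k=\int_{[0,1)}t^nS_Kf(t)\,d\mu(t)$ with a $\mu$-integrable majorant for the partial sums $S_Kf$. As written, the proposal establishes only the weaker statement that $\mathcal{I}_{\mu_2}(f)$ is analytic and that its Taylor coefficients are the Abel sums of the series defining $\mathcal{DH}_\mu(f)$, and is therefore incomplete.
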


\begin{theorem}\label{TH4.1}
Suppose $0<\alpha<1$ and let $\mu$ be a positive measure on $[0,1)$ which satisfies the conditions in Lemma \ref{lm4.2} and Lemma \ref{Th2.2}. Then  $\mathcal{DH}_{\mu}: \mathcal{B}_\alpha\rightarrow BMOA$ is bounded  if and only if  $\mu$ is a $2$-Carleson measure.
\end{theorem}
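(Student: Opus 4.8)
The plan is to follow the blueprint of Theorem \ref{th3.2}, exploiting the fact that for $0<\alpha<1$ every $f\in\mathcal{B}_\alpha$ is bounded, with $|f(z)|\lesssim\|f\|_{\mathcal{B}_\alpha}$ by Lemma \ref{Lm1.1}(i). This boundedness replaces the logarithmic growth estimate used in the pure $BMOA$ setting, and it is precisely what makes the logarithmic factor disappear, producing a plain $2$-Carleson condition rather than a $1$-logarithmic $2$-Carleson one. Since $\mu$ satisfies the hypotheses of Lemma \ref{lm4.2} and Lemma \ref{Th2.2}, I work throughout with the identification $\mathcal{DH}_\mu=\mathcal{I}_{\mu_2}$, so that $\mathcal{DH}_\mu(f)(z)=\int_{[0,1)}f(t)(1-tz)^{-2}d\mu(t)$ and $(\mathcal{DH}_\mu(f))'(z)=\int_{[0,1)}2tf(t)(1-tz)^{-3}d\mu(t)$.

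For sufficiency, I would assume $\mu$ is a $2$-Carleson measure and estimate the $BMOA$ seminorm directly from its integral definition. Bounding $|f(t)|\lesssim\|f\|_{\mathcal{B}_\alpha}$ and $t\le 1$ inside the derivative gives
$$\|\mathcal{DH}_\mu(f)\|_*\lesssim\|f\|_{\mathcal{B}_\alpha}\sup_{a\in\mathbb{D}}\left(\int_{\mathbb{D}}\left(\int_{[0,1)}\frac{d\mu(t)}{|1-tz|^3}\right)^2(1-|\varphi_a(z)|^2)\,dA(z)\right)^{1/2}.$$
I would then apply Minkowski's integral inequality to pull the $d\mu$-integral outside the $L^2(dA)$-norm, substitute $1-|\varphi_a(z)|^2=(1-|a|^2)(1-|z|^2)|1-\bar a z|^{-2}$, and invoke the second estimate of Lemma \ref{le4.3} with $\gamma=1$, $\alpha=6$, $\beta=2$ (legitimate since $\beta=2<2+\gamma=3<\alpha=6$) to obtain the inner bound $\lesssim(1-|a|^2)^{1/2}(1-t^2)^{-3/2}|1-ta|^{-1}$. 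It then remains to verify
$$\sup_{a\in\mathbb{D}}\int_{[0,1)}\frac{(1-|a|^2)^{1/2}}{(1-t^2)^{3/2}|1-ta|}\,d\mu(t)<\infty,$$
which is exactly statement (iii) of Lemma \ref{le4.1111} with $\gamma=2$, $\alpha=3/2$, $\beta=1/2$ (so $\gamma+\beta-\alpha=1$), and hence holds because $\mu$ is $2$-Carleson. The constant term $|\mathcal{DH}_\mu(f)(0)|=\big|\int_{[0,1)}f\,d\mu\big|\lesssim\|f\|_{\mathcal{B}_\alpha}\mu([0,1))$ is harmless.

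For necessity, I would first reproduce the duality computation (\ref{eqn-2.4}): since $f$ is bounded and $\mu$ is finite, $\int_{[0,1)}|f|\,d\mu<\infty$, so Fubini's theorem and the Cauchy representation of $H^1$ apply as before, and the duality $(H^1)^\ast\cong BMOA$ converts boundedness into the analogue of (\ref{eqn4.4}),
$$\left|\int_{[0,1)}\overline{f(t)}\big(g(rt)+rt\,g'(rt)\big)\,d\mu(t)\right|\lesssim\|f\|_{\mathcal{B}_\alpha}\|g\|_{H^1},\quad 0\le r<1.$$
Unlike the $BMOA$ case, I can now test with the constant function $f\equiv 1\in\mathcal{B}_\alpha$ (of norm $1$) together with $g_b(z)=\frac{1-b^2}{(1-bz)^2}\in H^1$, for which $\sup_b\|g_b\|_{H^1}\lesssim 1$. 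A direct computation gives $g_b(rt)+rt\,g_b'(rt)=\frac{(1-b^2)(1+brt)}{(1-brt)^3}$, which is positive; restricting the integral to $[b,1)$ and letting $r\to 1^-$ (Fatou) produces the lower bound $\gtrsim\mu([b,1))/(1-b^2)^2$. Combining this with the displayed estimate forces $\mu([b,1))\lesssim(1-b^2)^2\asymp(1-b)^2$, i.e. $\mu$ is a $2$-Carleson measure.

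The routine content consists of the two integral estimates; the only points needing care are matching the exponents so that Lemma \ref{le4.3} lands in its second, non-symmetric case and Lemma \ref{le4.1111} reads off a $2$-Carleson (rather than some other $\gamma$-Carleson) condition, together with justifying the passage $r\to 1^-$ in the test-function step. I do not anticipate a genuine obstacle: the boundedness of $\mathcal{B}_\alpha$-functions makes every majorization cleaner than in Theorem \ref{th3.2}, and the constant test function already captures the sharp Carleson exponent.
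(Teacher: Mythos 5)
Your proposal is correct and follows essentially the same route as the paper: the same duality reduction via $(H^1)^*\cong BMOA$, the same test functions $f\equiv 1$ and $g_b(z)=(1-b^2)(1-bz)^{-2}$ for necessity, and the same Minkowski--Lemma \ref{le4.3}--Lemma \ref{le4.1111} chain for sufficiency, with the exponents matched exactly as in the paper's computation. The only additions are minor explicit justifications (Fatou for $r\to1^-$, the parameter bookkeeping in the two lemmas) that the paper leaves implicit.
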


\begin{proof}
Lemma \ref{Lm1.1} implies that
$$
\int_{[0,1)}^{} \left | f(t) \right | d\mu(t)<\infty, \quad for \ all\ f\in\mathcal{B}^\alpha .
$$
Arguing as in the proof of Theorem \ref{th3.2}, we can say that whenever $0<r<1$, $f\in\mathcal{B}^\alpha$ and $g\in H^1$, we have that
\begin{align}\label{eqn-3.1}
  \frac{1}{2\pi}\int_{0}^{2\pi } \overline{{\mathcal{DH}_{\mu}(f)\left ( re^{i\theta } \right ) }} g\left ( e^{i\theta } \right )d\theta
&=\int_{[0,1)}^{}\overline{f(t)}\left ( g\left ( rt \right )+ rt{g}'\left ( rt \right )   \right )  d\mu(t).\
\end{align}

 Recall the duality relation $(H^1)^\ast\cong BMOA$ and using (\ref{eqn-3.1}), it is easy to see that $\mathcal{DH}_{\mu}: \mathcal{B}_\alpha\rightarrow BMOA$ is bounded if and only if
\begin{align}\label{eqn4.4}
\left | \int_{[0,1)}^{}\overline{f(t)}\left ( g\left ( rt \right )+ rt{g}'\left ( t \right )   \right )  d\mu(t) \right | \lesssim\left \| f \right \|_{\mathcal{B}^\alpha  }\left \| g \right \|_{H^1  }.
\end{align}

Suppose that $\mathcal{DH}_{\mu}: \mathcal{B}_\alpha\rightarrow BMOA$ is bounded. For $0<b<1$, we set that
$$
f_b(z)=1 \quad and \quad g_b(z)=\frac{1-b^2}{\left ( 1-bz \right )^2  }\quad z\in\mathbb{D}.
$$
Then $f_b(z)\in\mathcal{B}_\alpha$, $g_b(z)\in H^1$ and
$$
\sup\limits_{ b\in(0,1)}\left \| f_ b \right \|_{\mathcal{B}^\alpha }\lesssim1 \quad and \quad \sup\limits_{ b\in(0,1)}\left \| g_b \right \|_{H^1 }\lesssim1.
$$
Then
$$
\begin{aligned}
1&\gtrsim\sup\limits_{ b\in(0,1)}\left \| f_b \right \|_{\mathcal{B}^\alpha }\sup\limits_{ b\in(0,1)}\left \| g_b \right \|_{H^1 }\\
&\gtrsim \left | \int_{[0,1)}^{}\overline{f_b(t)}\left ( g_b\left ( rt \right )+ rt{g}'_b\left ( rt \right )   \right )d\mu(t) \right |\\
&\gtrsim\int_{[b,1)}^{}\left (\frac{1-b^2}{\left ( 1-brt \right )^2  }+2br^2t\frac{1-b^2}{\left ( 1-brt \right )^3  }  \right )d\mu(t)\\
&\gtrsim\frac{1}{\left ( 1-b^2 \right )^2  }\mu([b,1)).
\end{aligned}
$$
Therefore, $\mu$ is a $2$-Carleson measure.

 Otherwise, if $\mu$ is a $2$-Carleson measure and $f(z)\in\mathcal{B}_\alpha$, we obtain that
$$
\begin{aligned}
&\left \| \mathcal{DH}_\mu(f)  \right \| _{BMOA}\\
=&\sup\limits_{ a\in\mathbb{D}}\left ( \int_{\mathbb{D} }^{}\left | \int_{[0,1)}^{} \frac{2tf(t)}{(1-tz)^3}d\mu(t)\right |^2\left ( 1-\left | \varphi _a(z) \right |^2 \right )dA(z)   \right )^\frac{1}{2}\\
\lesssim&\left \| f \right \|_{\mathcal{B}_\alpha  }\sup\limits_{ a\in\mathbb{D}}\left ( \int_{\mathbb{D} }^{}\left ( \int_{[0,1)}^{} \frac{1 }{\left | 1-tz \right | ^3}d\mu(t)\right )^2\left ( 1-\left | \varphi _a(z) \right |^2 \right ) dA(z)   \right )^\frac{1}{2}.\\
\end{aligned}
$$
Using Minkowski's inequality, Lemma \ref{le4.3} and  Lemma \ref{le4.1111}, we have that
$$
\begin{aligned}
&\sup\limits_{ a\in\mathbb{D}}\left ( \int_{\mathbb{D} }^{}\left (\int_{[0,1)}^{} \frac{1}{\left | 1-tz \right | ^3}d\mu(t)\right )^2\left ( 1-\left | \varphi _a(z) \right |^2 \right ) dA(z)   \right )^\frac{1}{2}\\
\lesssim&\sup\limits_{ a\in\mathbb{D}} \int_{[0,1)}^{}\left(\int_{\mathbb{D} }^{}\frac{1}{\left | 1-tz \right | ^6}\left (1-\left | \varphi _a(z) \right |^2 \right )dA(z) \right )^\frac{1}{2}d\mu(t)\\
\lesssim&\left ( 1-\left | a \right |^2  \right ) ^\frac{1}{2}\sup\limits_{ a\in\mathbb{D}} \int_{[0,1)}^{}\left(\int_{\mathbb{D} }^{}\frac{1-\left | z \right |^2  }{\left | 1-tz \right | ^6\left | 1-\bar{a}z  \right | ^2}dA(z) \right )^\frac{1}{2}d\mu(t)\\
\lesssim&\int_{[0,1)}^{}\frac{\left ( 1-\left | a \right |^2  \right ) ^\frac{1}{2}}{\left ( 1-t^2 \right ) ^\frac{3}{2}\left | 1-ta  \right |}d\mu(t)<\infty.
\end{aligned}
$$
Consequently, this means that $\mathcal{DH}_{\mu}: \mathcal{B}_\alpha\rightarrow BMOA$ is bounded.
\end{proof}

\begin{theorem}\label{th3.6}
Let $\mu$ be a positive Borel measure on $[0,1)$ with $\int_{[0,1)}^{}\log \frac{e}{1-t}  d\mu(t)<\infty$ and satisfies the Lemma \ref{Th2.2}. Then $\mathcal{DH}_{\mu}: \mathcal{B}\rightarrow BMOA$ is bounded if and only if  $\mu$ is a $1$-logarithmic $2$-Carleson measure.
\end{theorem}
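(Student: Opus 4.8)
The plan is to follow the template of Theorem~\ref{th3.2} almost verbatim, the crucial observation being that its sufficiency estimate used the hypothesis $f\in BMOA$ only through the Bloch growth bound $|f(z)|\lesssim\|f\|_{\mathcal B}\log\frac{e}{1-|z|}$, so replacing $\|f\|_{BMOA}$ by $\|f\|_{\mathcal B}$ leaves that argument intact; and the test functions used for necessity already lie in $\mathcal B$ with uniformly bounded Bloch norm. The hypothesis that $\mu$ satisfies Lemma~\ref{Th2.2} guarantees $\mathcal{DH}_\mu=\mathcal{I}_{\mu_2}$ on $\mathcal B$, so that $\mathcal{DH}_\mu(f)(z)=\int_{[0,1)}\frac{f(t)}{(1-tz)^2}\,d\mu(t)$ and its derivative may be computed under the integral sign.

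First I would set up the pairing. By Lemma~\ref{Lm1.1}(ii) every $f\in\mathcal B$ satisfies $|f(t)|\lesssim\|f\|_{\mathcal B}\log\frac{e}{1-t}$, so the standing hypothesis $\int_{[0,1)}\log\frac{e}{1-t}\,d\mu(t)<\infty$ yields $\int_{[0,1)}|f(t)|\,d\mu(t)<\infty$. This absolute convergence is precisely what justifies Fubini's theorem and Cauchy's integral representation in $H^1$ exactly as in the derivation of (\ref{eqn-2.4}); hence for $0<r<1$, $f\in\mathcal B$ and $g\in H^1$,
\[
\frac{1}{2\pi}\int_0^{2\pi}\overline{\mathcal{DH}_\mu(f)(re^{i\theta})}\,g(e^{i\theta})\,d\theta=\int_{[0,1)}\overline{f(t)}\bigl(g(rt)+rt\,g'(rt)\bigr)\,d\mu(t).
\]
Invoking the duality $(H^1)^\ast\cong BMOA$ then shows that $\mathcal{DH}_\mu:\mathcal B\to BMOA$ is bounded if and only if
\[
\Bigl|\int_{[0,1)}\overline{f(t)}\bigl(g(rt)+rt\,g'(rt)\bigr)\,d\mu(t)\Bigr|\lesssim\|f\|_{\mathcal B}\,\|g\|_{H^1},\qquad 0\le r<1.
\]

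For necessity I would feed this estimate the same pair as in Theorem~\ref{th3.2}, namely $f_b(z)=\log\frac{e}{1-bz}$ and $g_b(z)=\frac{1-b^2}{(1-bz)^2}$ for $0<b<1$. Since $f_b\in BMOA\subset\mathcal B$ with $\sup_b\|f_b\|_{\mathcal B}\lesssim1$ and $\sup_b\|g_b\|_{H^1}\lesssim1$, restricting the integral to $[b,1)$ and retaining the leading singular term produces a lower bound of order $\frac{\log\frac{e}{1-b^2}}{(1-b^2)^2}\mu([b,1))$, which forces $\mu$ to be a $1$-logarithmic $2$-Carleson measure.

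For sufficiency, assume $\mu$ is $1$-logarithmic $2$-Carleson and set $d\nu(t)=\log\frac{e}{1-t}\,d\mu(t)$, a $2$-Carleson measure by Proposition~2.5 of \cite{9}. I would estimate $\|\mathcal{DH}_\mu(f)\|_{BMOA}$ directly through its derivative, bound $|f(t)|$ by $\|f\|_{\mathcal B}\log\frac{e}{1-t}$ to convert $d\mu$ into $d\nu$, and then run the identical chain of Minkowski's inequality followed by Lemma~\ref{le4.3} and Lemma~\ref{le4.1111} that closes Theorem~\ref{th3.2}; the terminal integral $\sup_a\int_{[0,1)}\frac{(1-|a|^2)^{1/2}}{(1-t^2)^{3/2}|1-ta|}\,d\nu(t)$ is finite precisely because $\nu$ is $2$-Carleson. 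I do not expect a genuinely new obstacle here: the only point requiring care is that the Fubini step in the pairing identity be legitimate for Bloch rather than $BMOA$ functions, which is secured by the absolute convergence $\int_{[0,1)}|f(t)|\,d\mu(t)<\infty$ established at the outset, so that the entire argument is a transcription of the $BMOA\to BMOA$ proof with the Bloch norm in place of the $BMOA$ norm.
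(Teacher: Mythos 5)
Your proposal is correct and follows essentially the same route as the paper: the paper's own proof establishes the absolute convergence $\int_{[0,1)}|f(t)|\,d\mu(t)\lesssim\|f\|_{\mathcal B}\int_{[0,1)}\log\frac{e}{1-t}\,d\mu(t)$, derives the pairing identity and the duality criterion, and then explicitly defers the rest to the argument of Theorem~\ref{th3.2}, which is exactly the transcription you carry out (same test functions $f_b(z)=\log\frac{e}{1-bz}$, $g_b(z)=\frac{1-b^2}{(1-bz)^2}$ for necessity, same measure $d\nu(t)=\log\frac{e}{1-t}\,d\mu(t)$ and Minkowski/Lemma~\ref{le4.3}/Lemma~\ref{le4.1111} chain for sufficiency). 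Your observation that the $BMOA$ hypothesis in Theorem~\ref{th3.2} was only used through the Bloch growth bound is precisely the point that makes the omitted details routine.
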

\begin{proof}
Since $\int_{[0,1)}^{}\log \frac{2}{1-t}  d\mu(t)<\infty$, it follows that
$$
\int_{[0,1)}^{}\left | f(t) \right |d\mu(t) \lesssim\left \| f \right \|_\mathcal{B} \int_{[0,1)}^{}\log\frac{e}{1-t}d\mu(t)< \infty,\quad for\ all\ f\in\mathcal{B}.
$$
Hence, by (\ref{eqn-3.1}), we have that
\begin{align}\label{eqn555}
\frac{1}{2\pi}\int_{0}^{2\pi } \overline{{\mathcal{DH}_{\mu}(f)\left ( re^{i\theta } \right ) }} g\left ( e^{i\theta } \right )d\theta=\int_{[0,1)}^{}\overline{f(t)}\left ( g\left ( rt \right )+ rt{g}'\left ( rt \right )   \right )  d\mu(t)\quad 0<r<1,f\in\mathcal{B},g\in H^1.
\end{align}
Using duality, we see that $\mathcal{DH}_{\mu}: \mathcal{B}\rightarrow BMOA$ is bounded if and only if
\begin{align}\label{eqn5555}
\left | \int_{[0,1)}^{}\overline{f(t)}\left ( g\left ( rt \right )+ rt{g}'\left (rt \right )   \right )  d\mu(t) \right |\lesssim\|f\|_{\mathcal{B}}\|g\|_{H^1},f\in\mathcal{B},g\in H^1.
\end{align}
From moment, the proof is similar to the proof of Theorem \ref{th3.2}, we shall omit the details.

\end{proof}

\begin{theorem}\label{th3.8}
Suppose $\alpha>1$ and let $\mu$ be a positive measure on $[0,1)$ which satisfies the conditions in Lemma \ref{lm4.2} and Lemma \ref{Th2.2}. Then  $\mathcal{DH}_{\mu}: \mathcal{B}_\alpha\rightarrow BMOA$ is bounded if and only if $\mu$ is a $(1+\alpha)$-Carleson measure.
\end{theorem}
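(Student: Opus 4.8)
The plan is to follow the scheme of Theorems \ref{th3.2} and \ref{TH4.1} almost verbatim, changing only the growth estimate for functions in the domain and the test function used for necessity. First I would record that for $\alpha>1$ Lemma \ref{Lm1.1}(iii) gives $|f(t)|\lesssim \|f\|_{\mathcal B_\alpha}(1-t^2)^{1-\alpha}$, so that the hypothesis of Lemma \ref{lm4.2}(iii), namely $\int_{[0,1)}(1-t)^{1-\alpha}d\mu(t)<\infty$, guarantees $\int_{[0,1)}|f(t)|d\mu(t)<\infty$ for every $f\in\mathcal B_\alpha$. Since Lemma \ref{Th2.2} supplies $\mathcal{DH}_\mu=\mathcal I_{\mu_2}$, the computation \eqref{eqn-2.4} carries over and, together with the duality $(H^1)^\ast\cong BMOA$, reduces the boundedness of $\mathcal{DH}_\mu\colon\mathcal B_\alpha\to BMOA$ to the single inequality
$$\Bigl|\int_{[0,1)}\overline{f(t)}\bigl(g(rt)+rt\,g'(rt)\bigr)d\mu(t)\Bigr|\lesssim \|f\|_{\mathcal B_\alpha}\|g\|_{H^1},\qquad 0\le r<1,$$
valid for all $f\in\mathcal B_\alpha$ and $g\in H^1$.

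For necessity, the key new ingredient is a test function adapted to $\mathcal B_\alpha$: I would take $f_b(z)=(1-bz)^{1-\alpha}$ together with $g_b(z)=\frac{1-b^2}{(1-bz)^2}$ for $0<b<1$. A direct estimate using $\frac{1-|z|^2}{|1-bz|}\le 2$ shows $\sup_{b}\|f_b\|_{\mathcal B_\alpha}\lesssim1$, while $\sup_b\|g_b\|_{H^1}\lesssim1$ as before. Computing $g_b(rt)+rt\,g_b'(rt)=\frac{(1-b^2)(1+brt)}{(1-brt)^3}$, restricting the integral to $[b,1)$ where $f_b(t)\ge (1-b^2)^{1-\alpha}$, and letting $r\to1^-$ would yield
$$1\gtrsim \int_{[b,1)}(1-b^2)^{1-\alpha}\frac{1-b^2}{(1-bt)^3}d\mu(t)\gtrsim\frac{\mu([b,1))}{(1-b^2)^{\alpha+1}},$$
which is exactly the statement that $\mu$ is a $(1+\alpha)$-Carleson measure.

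For sufficiency, assuming $\mu$ is $(1+\alpha)$-Carleson I would express $\|\mathcal{DH}_\mu(f)\|_{BMOA}$ through the derivative of $\mathcal I_{\mu_2}(f)$, namely $\int_{[0,1)}\frac{2tf(t)}{(1-tz)^3}d\mu(t)$, insert the bound $|f(t)|\lesssim\|f\|_{\mathcal B_\alpha}(1-t^2)^{1-\alpha}$, and apply Minkowski's integral inequality exactly as in Theorem \ref{TH4.1}. Using $1-|\varphi_a(z)|^2=\frac{(1-|a|^2)(1-|z|^2)}{|1-\bar a z|^2}$ and Lemma \ref{le4.3} (its second estimate, with $\gamma=1$ and exponents $2<2+\gamma=3<6$) to evaluate the inner area integral, the whole expression collapses to
$$\sup_{a\in\mathbb D}\int_{[0,1)}\frac{(1-|a|^2)^{1/2}}{(1-t^2)^{\alpha+1/2}\,|1-ta|}\,d\mu(t).$$
Finally I would invoke Lemma \ref{le4.1111} with parameters $\beta=\tfrac12$, $\alpha_{\mathrm{L}}=\alpha+\tfrac12$ and $\gamma_{\mathrm{L}}=\alpha+1$ (so that $\gamma_{\mathrm{L}}+\beta-\alpha_{\mathrm{L}}=1$ and $0\le\alpha_{\mathrm L}<\gamma_{\mathrm L}$), which shows this supremum is finite precisely because $\mu$ is a $(1+\alpha)$-Carleson measure.

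The pairing identity and the two area/Carleson estimates are routine, being identical in structure to the earlier theorems. The main obstacle I anticipate is purely in the exponent bookkeeping: one must choose $f_b=(1-bz)^{1-\alpha}$ so that it stays uniformly bounded in $\mathcal B_\alpha$ yet is of order $(1-b^2)^{1-\alpha}$ on $[b,1)$, and one must match the exponents $2$ and $6$ in Lemma \ref{le4.3} and then $\beta=\tfrac12$, $\alpha_{\mathrm L}=\alpha+\tfrac12$ in Lemma \ref{le4.1111} so that the Carleson order produced is exactly $1+\alpha$ rather than a neighbouring value; any slip in these exponents would shift the threshold.
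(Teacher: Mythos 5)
Your proposal is correct and follows essentially the same route as the paper: duality with $(H^1)^*\cong BMOA$ to reduce to the pairing inequality, a test-function argument for necessity, and Minkowski plus Lemma \ref{le4.3} and Lemma \ref{le4.1111} for sufficiency. The only (immaterial) differences are that the paper uses $f_b(z)=\frac{1-b^2}{(1-bz)^{\alpha}}$ instead of your $(1-bz)^{1-\alpha}$ (both are uniformly bounded in $\mathcal B_\alpha$ and comparable to $(1-b^2)^{1-\alpha}$ on $[b,1)$), and in the sufficiency part the paper absorbs the weight into an auxiliary $2$-Carleson measure $d\nu(t)=(1-t)^{1-\alpha}d\mu(t)$ rather than applying Lemma \ref{le4.1111} directly to $\mu$ with your exponent bookkeeping, which is equivalent.
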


\begin{proof}
 Suppose that $\mathcal{DH}_{\mu}: \mathcal{B}_\alpha\rightarrow BMOA$ is bounded. For $0<b<1$, set
$$
f_b(z)=\frac{1-b^2}{\left ( 1-bz \right )^{\alpha}  } \quad and\quad g_b(z)=\frac{1-b^2}{\left ( 1-bz \right )^2  },\quad  z\in\mathbb{D}.
$$
Then $f_b(z)\in\mathcal{B}_\alpha$, $g_b(z)\in H^1$ and
$$
\sup\limits_{ b\in(0,1)}\left \| f_b \right \|_{\mathcal{B}_\alpha }\lesssim1 \quad and \quad \sup\limits_{ b\in(0,1)}\left \| g_b \right \|_{H^1 }\lesssim1.
$$
By (\ref{eqn4.4}), we get that
$$
\begin{aligned}
1&\gtrsim\sup\limits_{ b\in(0,1)}\left \| f_b \right \|_{\mathcal{B}^\alpha }\sup\limits_{ b\in(0,1)}\left \| g_b \right \|_{H^1 }\\
&\gtrsim \left | \int_{[0,1)}^{}\overline{f_b(t)}\left ( g_b\left ( rt \right )+ rt{g}'_b\left ( rt \right )   \right )d\mu(t) \right |\\
&\gtrsim\int_{[b,1)}^{}\frac{1-b^2}{\left ( 1-bt \right )^{\alpha}  }\left (\frac{1-b^2}{\left ( 1-brt \right )^2  }+2br^2t\frac{1-b^2}{\left ( 1-brt \right )^3  }  \right )d\mu(t)\\
&\gtrsim\frac{1}{\left ( 1-b^2 \right )^{1+\alpha}  }\mu([b,1)).
\end{aligned}
$$
Therefore, $\mu$ is a $(1+\alpha)$-Carleson measure.

 Otherwise, let $\mu$ be a $(1+\alpha)$-Carleson measure, and let $d\nu(t)=(1-t)^{1-\alpha}d\mu(t)$. Then $\nu$ is a $2$-Carleson measure by Proposition 2.5 in \cite{9}. Then, using Minkowski's inequality, Lemma \ref{le4.3} and  Lemma \ref{le4.1111}, we obtain that
$$
\begin{aligned}
&\left \| \mathcal{DH}_\mu(f)  \right \| _{BMOA}\\
=&\sup\limits_{ a\in\mathbb{D}}\left ( \int_{\mathbb{D} }^{}\left | \int_{[0,1)}^{} \frac{2tf(t)}{(1-tz)^3}d\mu(t)\right |^2\left ( 1-\left | \varphi _a(z) \right |^2 \right )dA(z)   \right )^\frac{1}{2}\\
\lesssim&\left \| f \right \|_{\mathcal{B}_\alpha  }\sup\limits_{ a\in\mathbb{D}}\left ( \int_{\mathbb{D} }^{}\left ( \int_{[0,1)}^{} \frac{ (1-t)^{1-\alpha}}{\left | 1-tz \right | ^3}d\mu(t)\right )^2\left ( 1-\left | \varphi _a(z) \right |^2 \right ) dA(z)   \right )^\frac{1}{2}\\
\lesssim&\left \| f \right \|_{\mathcal{B}_\alpha  }\sup\limits_{ a\in\mathbb{D}}\left ( \int_{\mathbb{D} }^{}\left (\int_{[0,1)}^{} \frac{1}{\left | 1-tz \right | ^3}d\nu(t)\right )^2\left ( 1-\left | \varphi _a(z) \right |^2 \right ) dA(z)   \right )^\frac{1}{2}\\
\lesssim&\left \| f \right \|_{\mathcal{B}_\alpha  }\sup\limits_{ a\in\mathbb{D}}\int_{[0,1)}^{}\frac{\left ( 1-\left | a \right |^2  \right ) ^\frac{1}{2}}{\left ( 1-t^2 \right ) ^\frac{3}{2}\left | 1-ta  \right |}d\mu(t)<\infty.
\end{aligned}
$$
Therefore, $\mathcal{DH}_{\mu}: \mathcal{B}_\alpha\rightarrow BMOA$ is bounded.
\end{proof}

 \end{document}